\documentclass[11pt,reqno]{amsart}

\usepackage[utf8]{inputenc}
\usepackage[margin=1.1in]{geometry}
\usepackage{amsmath,amssymb}
\usepackage{enumitem}
\usepackage{microtype}
\usepackage[
  hidelinks,
  pdftitle={Mapping class groups of negatively curved four-manifolds}
]{hyperref}

\setlist{nosep,leftmargin=2em}

\newtheorem{theorem}{Theorem}[section]
\newtheorem{proposition}[theorem]{Proposition}
\newtheorem{lemma}[theorem]{Lemma}
\newtheorem{corollary}[theorem]{Corollary}

\newcommand{\Diff}{\operatorname{Diff}}
\newcommand{\Homeo}{\operatorname{Homeo}}
\newcommand{\Out}{\operatorname{Out}}
\newcommand{\Int}{\operatorname{int}}
\newcommand{\id}{\operatorname{id}}
\newcommand{\ABG}{A_{\mathrm{BG}}}
\newcommand{\R}{\mathbb R}
\newcommand{\Z}{\mathbb Z}
\newcommand{\Q}{\mathbb Q}
\newcommand{\TTop}{T_{\mathrm{TOP}}}
\newcommand{\Ic}{I_c}
\newcommand{\Lc}{L_c}

\title{Mapping class groups of negatively curved four-manifolds}
\author{Weizhe Niu}
\date{}
\address{Yau Mathematical Sciences Center, Tsinghua University}
\email{weizheniu@mail.tsinghua.edu.cn}
\subjclass[2020]{Primary 57R50; Secondary 57S05, 53C20}
\keywords{mapping class groups, negatively curved manifolds, Torelli groups, Budney--Gabai diffeomorphisms, pseudo-isotopy}

\begin{document}

\begin{abstract}
For every closed oriented smooth four-manifold $M$ admitting a metric of strictly negative sectional curvature, we prove that
$\dim_{\Q}H_1(T(M);\Q)=\dim_{\Q}H_1(\TTop(M);\Q)=\infty$ where $T(M)$ and
$T_{\mathrm{TOP}}(M)$ denote the Torelli groups.
This extends a theorem of Budney--Gabai for real hyperbolic manifolds. 
\end{abstract}

\maketitle

\section{Introduction}

For a closed oriented smooth manifold $M$, we denote the Torelli groups by
\[
  \begin{aligned}
    T(M)&=\ker\!\left[\pi_0\Diff^+(M)\longrightarrow\Out(\pi_1M)\right],\\
    \TTop(M)&=\ker\!\left[\pi_0\Homeo^+(M)\longrightarrow\Out(\pi_1M)\right]
  \end{aligned}.
\]

\begin{theorem}\label{thm:main}
Let $M$ be a closed oriented smooth four-manifold admitting a Riemannian metric of strictly negative sectional curvature.  Then
\[
  \dim_{\Q}H_1(T(M);\Q)
  =\dim_{\Q}H_1(\TTop(M);\Q)=\infty.
\]
In particular, neither Torelli group is finitely generated.  More precisely, there is an injective homomorphism
\[
  \bigoplus_{k\geq 4}\Z\hookrightarrow T(M)
\]
whose image may be represented by diffeomorphisms that
\begin{enumerate}[label=\textup{(\roman*)}]
  \item are homotopic to the identity;
  \item are smoothly pseudo-isotopic to the identity.
\end{enumerate}
Moreover, the composition
\[
  \bigoplus_{k\geq 4}\Z\longrightarrow \pi_0\Diff^+(M)
  \longrightarrow \pi_0\Homeo^+(M)
\]
is injective.
\end{theorem}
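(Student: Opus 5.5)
Following Budney--Gabai, the plan is to build the diffeomorphisms inside a tubular neighbourhood of a closed geodesic and to detect them by a pseudo-isotopy/Hatcher--Wagoner type invariant on the infinite cyclic cover of that neighbourhood. The curvature enters only through a few facts about closed geodesics and their lifts to the universal cover.

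Fix a primitive closed geodesic $\gamma\subset M$. The closed geodesics of a negatively curved metric are generically embedded, so after perturbing the metric (negativity is an open condition) we may take $\gamma$ embedded. Let $N\cong S^1\times D^3$ be a closed tubular neighbourhood. For each $k\ge 4$, the Budney--Gabai barbell construction gives a diffeomorphism $\phi_k$ of $S^1\times D^3$ rel boundary; this defines a homomorphism $\bigoplus_{k\ge4}\Z\to\pi_0\Diff_\partial(S^1\times D^3)$ whose image is detected by a Dax-type invariant valued in $\Z[t^{\pm1}]$ modulo symmetries. The $\phi_k$ are homotopic rel boundary to the identity and pseudo-isotopic rel boundary to the identity. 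Extending by the identity gives $\Phi:\bigoplus_{k\ge4}\Z\to\pi_0\Diff^+(M)$, and properties (i) and (ii) hold automatically. In particular the image lies in $T(M)$.

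The main work is injectivity, and the main obstacle is doing it in $\pi_0\Homeo^+(M)$. For a pseudo-isotopic-to-identity diffeomorphism $f$ with a chosen pseudo-isotopy, Hatcher--Wagoner/Igusa theory gives an invariant in $Wh_2$-type and $\Z[\pi_1M]$-modulo-conjugation data, $\Z[\pi_1 M]/\langle g-\sigma g,\dots\rangle$, well defined up to the image of $\pi_1\Diff$-ambiguity. I would use the Budney--Gabai formula: the invariant of $\Phi(x)$ is $\sum x_k(t^{k}\pm\text{sym})$ pushed into conjugacy classes via $t\mapsto[\gamma]$. The curvature is used in three ways.
\begin{enumerate}
\item Distinct powers $\gamma^k$ are pairwise non-conjugate and not conjugate to their inverses. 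This holds because hyperbolic groups are torsion-free and centralisers of nontrivial elements are cyclic.
\item $\pi_1\Diff(M)$ contributes no ambiguity on these classes. Since the centre of $\pi_1M$ is trivial, the ambiguity comes only from loops supported near elements of trivial conjugacy class.
\item $Wh(\pi_1M)=0$ and $Wh_2$ vanish rationally (Farrell--Jones).
\end{enumerate}
The invariant then factors through $\Z$-linear projections onto the distinct conjugacy classes $[\gamma^k]$, which shows $\Phi$ is injective.

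For the topological statement, I would use the fact that the smooth-to-topological pseudo-isotopy comparison is a rational equivalence on the relevant summand in low degrees. Equivalently, the Hatcher--Wagoner second obstruction is defined topologically via Kirby--Siebenmann and Burghelea--Lashof, so the invariant is a homeomorphism invariant up to 2-torsion. The classes are of infinite order, so injectivity persists.

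To get infinite rank of $H_1$, I would show the invariant is additive on all of $T(M)$ or $\TTop(M)$, not just on the image. Using (ii) and that $T(M)$ elements are homotopic to the identity (by Mostow-type rigidity, a homotopy equivalence inducing the identity on $\Out$ of an aspherical manifold is homotopic to the identity), the invariant extends to a homomorphism from the subgroup of pseudo-isotopic-to-identity classes. If that subgroup is not all of $T(M)$, I would pass through the finite-index or abelian quotient given by the Kirby--Siebenmann/pseudo-isotopy obstruction. This yields a surjection of $H_1(T(M);\Q)$ onto an infinite-dimensional $\Q$-vector space, and the same for $\TTop(M)$. I expect the delicate step to be verifying that the invariant is a well-defined homomorphism on all of $T(M)$ rather than only on the pseudo-isotopy subgroup, and checking the rational $\pi_1\Diff$/$\pi_1\Homeo$ ambiguity in the negatively curved (non-hyperbolic) setting.
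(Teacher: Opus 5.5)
Your construction of the classes and of properties (i)--(ii) agrees with the paper. There is, however, a genuine gap in the detection step, and it is more basic than the extension problem you flag at the end.

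The Hatcher--Wagoner second obstruction does not take values in $\Z[\pi_1M]$ modulo conjugation. Apart from the $\mathrm{Wh}_2(\pi_1M)$ term, it lives in $\mathrm{Wh}_1^+(\pi_1M;\Z/2\times\pi_2M)$. Since $M$ is aspherical, $\pi_2M=0$, so this is an $\mathbb{F}_2$-vector space on the nontrivial conjugacy classes. This is why the high-dimensional Farrell--Jones results of this kind produce infinitely many $\Z/2$ summands rather than $\Z$ summands. Combined with your own item~(3), the whole invariant vanishes after $\otimes\Q$.

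An invariant of exponent $2$ vanishes on $2x$. So it cannot prove that $\bigoplus_{k\ge4}\Z\to\pi_0\Diff^+(M)$ is injective, and it says nothing about $\dim_\Q H_1$.

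The integral formula $\sum_k x_k(t^k\pm\cdots)$ you quote belongs to Budney--Gabai's Dax-type invariant of the embedded ball $\delta(\{pt\}\times D^3)$ rel boundary in $S^1\times D^3$. That is an invariant of embedded $3$-balls, not the Hatcher--Wagoner invariant of a pseudo-isotopy. It has no evident extension to an arbitrary Torelli class of the closed manifold $M$.

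The auxiliary steps inherit the problem. The indeterminacy from the choice of pseudo-isotopy is the image of $\pi_0\Diff(M\times I\mathbin{\mathrm{rel}}\partial)$, which is not controlled by $\pi_1\Diff(M)$ or by $Z(\pi_1M)=1$. And topological invariance ``up to $2$-torsion'' would erase an invariant that is itself $2$-torsion.

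What is missing is a homomorphism defined on \emph{all} of $\TTop(M)$, with abelian target, that restricts to the identity on the implanted subgroup. Such a map gives injectivity in $\pi_0\Diff^+(M)$ and $\pi_0\Homeo^+(M)$ and both $H_1$ statements at once. It also removes any need to know whether the pseudo-isotopically trivial classes exhaust $T(M)$.

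The paper builds this from the cover $M_H=\widetilde M/H$, $H=\langle g\rangle$, of a shortest closed geodesic $c$. Such a geodesic is automatically embedded and primitive, so no metric perturbation is needed. The construction runs as follows.
\begin{enumerate}
\item For $[f]\in\TTop(M)$, asphericity and innerness of $f_*$ make $f$ freely homotopic to the identity.
\item The endpoint of the lifted homotopy is a $G$-equivariant lift $\widetilde f$, unique because $Z(G)=1$, at bounded distance from the identity. It descends to $M_H$.
\item The Farrell--Ontaneda compactification $\overline M_H\cong S^1\times D^3$ has boundary $(\partial_\infty\widetilde M\setminus\{a^+,a^-\})/H$. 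Hence bounded-displacement maps and isotopies extend by the identity on the boundary.
\item This defines $\Lc:\TTop(M)\to\pi_0\Homeo(S^1\times D^3\mathbin{\mathrm{rel}}\partial)$, whose target is abelian.
\item For an implanted word $\delta$, the lift is $\delta$ on the compact core and the preferred cyclic lift on each noncore component $\R\times D^3$.
\item Budney--Gabai show that the preferred lift of $\delta_k$ to a $d_k$-fold cyclic cover is isotopic to the identity rel boundary. This yields an isotopy cancelling all noncore copies with uniformly bounded displacement, so $\Lc\circ\Ic=\id$.
\item Budney--Gabai's embedding of $\ABG$ in $\pi_0\Homeo(S^1\times D^3\mathbin{\mathrm{rel}}\partial)$ then does the rest.
\end{enumerate}
Pseudo-isotopy theory enters only through Kosanovi\'c, for item~(ii).
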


Budney--Gabai prove that their implanted barbell mapping classes remain independent on closed real hyperbolic four-manifolds \cite[Theorem~5.1 and its proof]{BG}. Their construction also implies both Torelli-$H_1$ conclusions in that case \cite[Section~4 and the paragraph following the proof of Theorem~5.1]{BG}. Theorem~\ref{thm:main} extends these conclusions to every closed four-manifold admitting a metric of strictly negative sectional curvature.

Our proof adapts Budney--Gabai's argument to variable negative curvature, using the compactification of Farrell--Ontaneda \cite{FO}.

By a theorem of Paulin \cite{Paulin}, $\Out(\pi_1M)$ is finite.
Hence $T(M)$ and $\TTop(M)$ have finite index in
$\pi_0\Diff^+(M)$ and $\pi_0\Homeo^+(M)$, respectively.
A finite-index subgroup of a finitely generated group is finitely
generated, so the full mapping class groups are also not finitely generated.

\begin{corollary}\label{cor:full-mcg}
Let $M$ be as in Theorem~\ref{thm:main}. Then $\pi_0\Diff^+(M)$ and
$\pi_0\Homeo^+(M)$ are not finitely generated.
\end{corollary}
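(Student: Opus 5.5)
The corollary follows formally from Theorem~\ref{thm:main} and Paulin's finiteness theorem; no new geometry is needed. I will argue by contradiction, separately for $\pi_0\Diff^+(M)$ and $\pi_0\Homeo^+(M)$; the two cases are identical. Write $G$ for either group and $T$ for the corresponding Torelli group, $T(M)$ or $\TTop(M)$. By definition, $T$ is the kernel of the homomorphism $G\to\Out(\pi_1M)$. Hence $T$ is normal in $G$, and $G/T$ embeds in $\Out(\pi_1M)$.

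The first step is to note that $\Out(\pi_1M)$ is finite, by Paulin's theorem. Here $\pi_1M$ is a torsion-free hyperbolic group, because $M$ is a closed negatively curved manifold. It is not virtually cyclic, since $\dim M=4$ and $M$ is aspherical. It also does not split over a virtually cyclic subgroup, because it is a Poincaré duality group of dimension $4\ge 3$. Since $G/T$ embeds in a finite group, $[G:T]<\infty$.

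The second step is the Schreier lemma: a finite-index subgroup of a finitely generated group is finitely generated. Concretely, if $G$ is generated by a finite set $S$ and $R$ is a finite set of coset representatives for $T$, then the Schreier generators $r s \,\overline{rs}^{-1}$, with $r\in R$ and $s\in S$, form a finite generating set for $T$. So if $G$ were finitely generated, $T$ would be too.

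The last step derives the contradiction. If $T$ were finitely generated, its abelianization $H_1(T;\Z)$ would be a finitely generated abelian group, so $H_1(T;\Q)=H_1(T;\Z)\otimes\Q$ would be finite-dimensional. This contradicts $\dim_\Q H_1(T;\Q)=\infty$ from Theorem~\ref{thm:main}. Therefore $G$ is not finitely generated.

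The only step needing care is the appeal to Paulin, namely checking that $\pi_1M$ satisfies the hypotheses under which $\Out$ is finite. In the form used here, Paulin's theorem says that a hyperbolic group with infinite $\Out$ acts on an $\R$-tree with suitable stabilizers. Rips theory and Bestvina--Feighn then turn such an action into a splitting over a virtually cyclic group. Such a splitting is impossible for the fundamental group of a closed aspherical manifold of dimension at least $3$, because of cohomological dimension and duality. The remaining steps are routine.
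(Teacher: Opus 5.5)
Your proposal is correct and follows the paper's argument exactly. Paulin's theorem makes $\Out(\pi_1M)$ finite, so the Torelli groups have finite index; a finite-index subgroup of a finitely generated group is finitely generated, which contradicts the infinite-dimensional $H_1(\,\cdot\,;\Q)$ from Theorem~\ref{thm:main}. Your check of Paulin's hypotheses (torsion-free hyperbolic, not virtually cyclic, no cyclic splitting for a $\mathrm{PD}^4$ group) is detail the paper leaves implicit.
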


\begin{corollary}\label{cor:ch}
If $X$ is a closed complex hyperbolic surface, then
\[
  \dim_{\Q}H_1(T(X);\Q)
  =
  \dim_{\Q}H_1(\TTop(X);\Q)
  =
  \infty.
\]
In particular, this holds for every fake projective plane.
\end{corollary}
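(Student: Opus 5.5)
The statement to prove is Corollary~\ref{cor:ch}. The plan is to deduce it directly from Theorem~\ref{thm:main}, by checking that a closed complex hyperbolic surface meets its hypotheses.

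Let $X$ be a closed complex hyperbolic surface, that is, $X=\Gamma\backslash \mathbb{CH}^2$ with $\Gamma\subset PU(2,1)$ a torsion-free cocompact lattice. Then $X$ is a closed smooth four-manifold, and it is oriented by its complex structure. The first step is to record that the Bergman (complex hyperbolic) metric on $\mathbb{CH}^2$ has sectional curvature pinched between $-4$ and $-1$ after normalization, and in particular strictly negative. This metric is $PU(2,1)$-invariant, so it descends to $X$, which therefore carries a Riemannian metric of strictly negative sectional curvature. Theorem~\ref{thm:main} now applies and yields $\dim_{\Q}H_1(T(X);\Q)=\dim_{\Q}H_1(\TTop(X);\Q)=\infty$.

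For the final sentence, we recall that a fake projective plane is a smooth compact complex surface with the Betti numbers of $\mathbb{CP}^2$ that is not biholomorphic to $\mathbb{CP}^2$. Such a surface is of general type with $c_1^2=3c_2$. By Yau's solution of the Calabi conjecture, and the resulting equality case of the Miyaoka--Yau inequality, it is a ball quotient $\Gamma\backslash\mathbb{CH}^2$ with $\Gamma$ torsion-free and cocompact. So it is a closed complex hyperbolic surface, and the first part of the corollary applies to it.

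None of these steps is a real obstacle, since the corollary is a direct application of the theorem. The only point needing care is citing that fake projective planes are ball quotients, which is the standard consequence of Yau's theorem; here we take torsion-freeness of $\Gamma$ to be part of the statement that $X$ is a smooth manifold quotient.
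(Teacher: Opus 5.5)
Your proposal is correct and follows the same route as the paper: the complex hyperbolic metric has pinched, strictly negative sectional curvature, so Theorem~\ref{thm:main} applies, and fake projective planes are ball quotients. The only difference is in the second step, where you derive the ball-quotient property from Yau's equality case of the Miyaoka--Yau inequality, whereas the paper simply cites Prasad--Yeung.
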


\begin{proof}
With the normalization in which complex hyperbolic space has holomorphic sectional curvature $-1$, its real sectional curvatures lie in $[-1,-1/4]$ \cite{Goldman}.  Thus Theorem~\ref{thm:main} applies. A fake projective plane is a compact complex-hyperbolic surface \cite{PrasadYeung}.
\end{proof}

\medskip
\par\noindent\textbf{Acknowledgement of AI use.}
The author used AI for proofreading, LaTeX formatting and English-language editing.

\section{Budney--Gabai's mapping classes}

For a compact manifold $V$, we use
$\Diff(V\mathbin{\mathrm{rel}}\partial V)$ and
$\Homeo(V\mathbin{\mathrm{rel}}\partial V)$
to denote the automorphism groups fixing $\partial V$ pointwise.
When extending a relative map, isotopy, or pseudo-isotopy by the identity, we use a collar-fixing representative.

For a boundary-fixing diffeomorphism, the \emph{preferred lift} to a cyclic cover is the unique lift fixing a chosen lift of a boundary basepoint.

\begin{theorem}[Budney--Gabai and Kosanovi\'c]\label{thm:local-input}
There are diffeomorphisms
$\delta_k\in\Diff(S^1\times D^3\mathbin{\mathrm{rel}}\partial)$, $k\geq4$, with the following properties.
\begin{enumerate}[label=\textup{(\roman*)}]
  \item Their mapping classes freely generate $\ABG\cong\bigoplus_{k\geq4}\Z$ in
  $\pi_0\Diff(S^1\times D^3\mathbin{\mathrm{rel}}\partial)$, and the forgetful map embeds $\ABG$ in \(
  \pi_0\Homeo(S^1\times D^3\mathbin{\mathrm{rel}}\partial).
\)
  \item For every $k\geq 4$ there is a positive integer $d_k$ such that the preferred lift of $\delta_k$ to the $d_k$-fold cyclic cover of the $S^1$-factor is smoothly isotopic to the identity relative to the boundary.
  \item Every element of $\ABG$ is smoothly pseudo-isotopic to the identity relative to the boundary.
\end{enumerate}
\end{theorem}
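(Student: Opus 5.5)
This theorem assembles results from the literature, so the plan is to extract each item from Budney--Gabai \cite{BG} and from Kosanovi\'c's work rather than to build anything new. The diffeomorphisms $\delta_k$ will be the Budney--Gabai barbell diffeomorphisms $\delta_k$ of $S^1\times D^3$. Each $\delta_k$ is obtained by implanting the standard barbell diffeomorphism along an embedded barbell. The cuffs of that barbell are linked with the core $S^1\times 0$ according to the word indexed by $k$. For $k\ge 4$ these classes are detected by the Budney--Gabai invariant $W_3$, which is a map from $\pi_0\Diff(S^1\times D^3\mathbin{\mathrm{rel}}\partial)$ (through the space of embedded $3$-balls/arcs) to an abelian group of $\Z$-linear combinations of words. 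One checks that the values $W_3(\delta_k)$ are linearly independent. It then follows that the subgroup generated by the $\delta_k$ is free abelian on these generators; commutativity is shown in \cite{BG} by disjoint support after isotopy. This gives the smooth half of (i).

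For the topological half of (i) I would invoke Kosanovi\'c's theorem. It states that the embedding-calculus/Dax-type invariant detecting these classes factors through the topological category. Equivalently, the relevant invariant of the neat arc $\delta_k(\mathrm{pt}\times D^3)$ is defined for locally flat embeddings and is isotopy-invariant there. Hence $W_3\circ(\text{forget})$ still separates $\ABG$, so the forgetful map is injective on $\ABG$.

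For (ii), I would use the fact that a barbell in $S^1\times D^3$ lifts to finite cyclic covers. Passing to the $d$-fold cover unwraps the cuffs' winding around the core. Once $d$ exceeds the maximal winding exponent appearing in the defining word of $\delta_k$, each lifted barbell has cuffs that bound disjoint embedded disks in the complement. The preferred lift of $\delta_k$ is then a product of barbell implantations along trivial barbells, and each of these is isotopic to the identity rel boundary. This is the step where care is needed: one must check that the lifted barbells, as opposed to their individual cuffs, are genuinely trivial and mutually unlinked. I would take $d_k$ larger than the winding number of every letter in the word, and then argue this from the explicit picture in \cite[Section~4]{BG}.

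For (iii), \cite{BG} shows that each barbell diffeomorphism of $S^1\times D^3$ becomes isotopic to the identity after crossing with $I$. Concretely, the nullity of the barbell implantation in $S^1\times D^3\times I$ follows because the cuffs bound disks in one extra dimension. Pseudo-isotopy classes rel boundary form a group, so every element of $\ABG$ is pseudo-isotopic to the identity.

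I expect the main obstacle to be the topological injectivity in (i). That step relies entirely on Kosanovi\'c's extension of the invariant to locally flat embeddings, and it cannot be reproduced with smooth-only tools.
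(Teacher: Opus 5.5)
You have the sources essentially swapped relative to the paper. The paper's proof is pure citation: (i) in full, including the embedding of $\ABG$ into $\pi_0\Homeo(S^1\times D^3\mathbin{\mathrm{rel}}\partial)$, and (ii) come from Budney--Gabai \cite[Theorem~3.1, Proposition~4.1, proof of Theorem~5.1]{BG}. Kosanovi\'c \cite[Corollary~7.4 and Proposition~4.4]{Kosanovic} is used only for (iii).

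Your smooth half of (i) via $W_3$ and your sketch of (ii) are consistent with this. In (ii) you do not need the lifted barbells to be mutually unlinked. The barbell is simply connected, so the preferred lift is a product of $d$ disjointly supported lifted barbell maps. It suffices that each factor is individually isotopic to the identity rel boundary. Crediting the topological detection to an extension of the invariant by Kosanovi\'c is not what the paper relies on, and you would need to confirm that such a statement exists where you place it.

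The genuine gap is (iii): your mechanism does not produce a pseudo-isotopy. Read literally, ``$\delta_k\times\id_I$ is isotopic to the identity'' on $S^1\times D^3\times I$ rel $\partial(S^1\times D^3)\times I$ is false. Any such isotopy preserves the face $S^1\times D^3\times 0$, since it fixes the side pointwise and the two remaining boundary faces are distinct components. Restricting to that face would then isotope $\delta_k$ to the identity rel boundary, contradicting (i).

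What (iii) requires is a diffeomorphism $F$ of $S^1\times D^3\times I$ that is the identity on $S^1\times D^3\times 0\cup\partial(S^1\times D^3)\times I$ and equals $\delta_k$ on $S^1\times D^3\times 1$. In five dimensions an arc can be pushed off a $3$-ball bounded by a cuff by general position. But that at best trivializes a barbell-type map supported in the interior of the five-manifold. It gives no control of the top face, so it does not yield such an $F$.

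This is the nontrivial input the paper imports from Kosanovi\'c's grasper description of the barbell maps, and your argument does not replace it. Once (iii) holds for the generators, your remark that pseudo-isotopies compose (and invert) does correctly extend it to all of $\ABG$.
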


Items~(i) and~(ii) are due to Budney--Gabai
\cite[Theorem~3.1, Proposition~4.1, and the proof of Theorem~5.1]{BG}.
Item~(iii) follows from Kosanovi\'c
\cite[Corollary~7.4 and Proposition~4.4]{Kosanovic}.

\section{Geometry of the cyclic cover}

Let $M$ be as in Theorem~\ref{thm:main}. Let $\widetilde M$ be its universal cover, and write $G=\pi_1(M)$. Since $M$ is compact, its sectional curvatures satisfy $-b^2\leq K\leq-a^2<0$ for some $a,b>0$. The lifted metric makes $\widetilde M$ a Hadamard manifold: it is
complete, simply connected, and has nonpositive sectional curvature.

\subsection{A primitive geodesic}

\begin{lemma}\label{lem:systole}
There is a shortest nontrivial embedded closed geodesic $c\subset M$. It is primitive and its oriented normal bundle is trivial. In particular, a sufficiently small closed tube has an oriented product framing $U=\nu(c)\cong S^1\times D^3$.
\end{lemma}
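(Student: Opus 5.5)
We prove the statement in three steps. First we produce a shortest closed geodesic via a compactness argument. Next we show that it is embedded and primitive using negative curvature. Finally we get the trivial normal bundle from orientability.

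\emph{Existence.} Since $M$ is closed and negatively curved, $G=\pi_1(M)$ is nontrivial and torsion-free, and every nontrivial free homotopy class of loops contains a closed geodesic. By Cartan--Hadamard, $\widetilde M$ is diffeomorphic to $\R^4$, so $M$ is aspherical and not simply connected. Since $M$ is compact, there is a positive lower bound for the lengths of nontrivial closed geodesics: any closed geodesic of length less than twice the injectivity radius would lie in a ball around one of its points that lifts isometrically, and it would then lift to a closed geodesic in $\widetilde M$, which is impossible. There is also an upper bound for the number of free homotopy classes represented by loops of length at most $L$, because only finitely many deck transformations move a point of a compact fundamental domain by at most $L$. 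Hence the infimum of lengths of nontrivial closed geodesics is attained by some closed geodesic $c$.

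\emph{Embedded and primitive.} If $c$ were not primitive, it would represent $g^m$ with $m\ge2$. In strictly negative curvature the geodesic representative of $g$ has length $\ell(c)/m<\ell(c)$, which contradicts minimality; here we use that each nontrivial class has a unique geodesic, namely the projection of the axis of the hyperbolic isometry. If $c$ had a transverse self-intersection at a point $p$, we would split $c$ at $p$ into two loops $\alpha,\beta$ with $\ell(\alpha)+\ell(\beta)=\ell(c)$. If either loop is homotopically nontrivial, its geodesic representative is strictly shorter than that loop, because the loop has a corner at $p$ and so is not a smooth geodesic. This contradicts minimality. If both loops are trivial, then $c$ is null-homotopic, which is impossible. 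A nonprimitive parametrization traversing the same image twice has already been excluded. So $c$ is a simple closed curve, hence an embedded circle.

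\emph{Normal bundle.} Since $M$ is oriented and $c\cong S^1$ is oriented, the normal bundle $\nu(c)$ is an orientable rank-$3$ bundle over $S^1$. Oriented bundles over $S^1$ are classified by $\pi_0(SO(3))=0$, so $\nu(c)$ is trivial. Choosing an oriented trivialization and applying the exponential map to normal vectors of length at most $\epsilon$ gives, for $\epsilon$ smaller than the normal injectivity radius of the compact embedded submanifold $c$, a closed tube $U\cong S^1\times D^3$ with the product orientation matching that of $M$.

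The main obstacle is the existence step. It needs a careful statement of the discreteness and finiteness argument, namely that only finitely many conjugacy classes have translation length at most $L$. This follows from proper discontinuity of the cocompact action of $G$ on $\widetilde M$. The strictness of the shortening in the embeddedness argument also requires care; it follows from the uniqueness of geodesic representatives and the fact that a broken geodesic is not a geodesic.
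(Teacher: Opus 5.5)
Your proof is correct and follows the paper's argument: primitivity comes from $\ell(h^m)=|m|\,\ell(h)$ along a common axis, embeddedness from splitting at a self-intersection into two shorter loops at least one of which is nontrivial, and triviality of $\nu(c)$ from $\pi_0(SO(3))=0$. The only differences are that you justify existence by finiteness of conjugacy classes of bounded translation length (via proper discontinuity), where the paper simply cites compactness of the unit tangent bundle, and that your corner/strict-shortening remark and your restriction to transverse self-intersections are unnecessary. Each split loop already has length strictly less than $\ell(c)$, so the splitting argument handles every self-intersection at once, including tangential ones.
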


\begin{proof}
A shortest nontrivial closed geodesic \(c\) exists by compactness of the unit tangent bundle. Let $c$ have length $L$ and represent $g\in G$. Recall that every
nontrivial element of $G$ acts on $\widetilde M$ by translation along
a unique invariant geodesic line, called its \textit{axis}. If $g=h^m$ with $|m|>1$, then $h$ and $g$ have the same axis and $\ell(g)=|m|\ell(h)$, so the closed geodesic representing $h$ has length $L/|m|<L$. Thus $g$ is primitive.

A self-intersection of $c$ would split it into two based loops of length less than $L$. At least one is nontrivial, and its geodesic representative would contradict the minimality of $c$. Finally, every oriented rank-three bundle over $S^1$ is trivial, so $\nu(c)$ is trivial.
\end{proof}

Fix an orientation of $c$ and an oriented framing $U=\nu(c)\cong S^1\times D^3$. Let $g\in G$ be the primitive element represented by $c$, let $A\subset\widetilde M$ be its axis, and put $H=\langle g\rangle$ and $M_H=\widetilde M/H$. Then $g$ acts on $A$ by translation of distance $L$.

\begin{lemma}\label{lem:normal-exp}
The normal exponential map is an $H$-equivariant diffeomorphism
\[
  \exp^\perp:\nu(A)\xrightarrow{\cong}\widetilde M.
\]
\end{lemma}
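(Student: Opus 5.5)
The map $\exp^\perp:\nu(A)\to\widetilde M$, $(p,v)\mapsto\exp_p(v)$ with $p\in A$ and $v\perp T_pA$, is defined on the whole normal bundle because $\widetilde M$ is complete. It is $H$-equivariant because $g$ is an isometry preserving $A$. Indeed, $dg$ maps $\nu_p(A)$ to $\nu_{gp}(A)$, and $g\circ\exp_p=\exp_{gp}\circ dg_p$. So it remains to show that $\exp^\perp$ is a diffeomorphism. I will show that it is a bijective local diffeomorphism; that suffices, since a bijective local diffeomorphism has a smooth inverse.

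\emph{Injectivity and surjectivity.} Both come from the nearest-point projection onto the closed convex set $A$ in the Hadamard manifold $\widetilde M$. Given $x\in\widetilde M$, the function $p\mapsto d(x,p)$ on $A$ is proper and, by nonpositive curvature, convex along the geodesic $A$. Hence it attains its minimum, and it does so at a unique point $\pi(x)$: the distance to a geodesic is strictly convex unless the relevant geodesic triangles are flat, and uniqueness of geodesics in a Hadamard manifold rules out two distinct minimizers anyway. The first variation formula shows that the minimizing geodesic from $\pi(x)$ to $x$ leaves $A$ orthogonally. This gives surjectivity.

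For injectivity, suppose $\exp_p(v)=\exp_q(w)=x$ with $p\neq q$ on $A$ and both geodesics orthogonal to $A$. Then the geodesic triangle with vertices $p,q,x$ has two right angles. Its angle sum is at most $\pi$ in nonpositive curvature (comparison with Euclidean triangles). Therefore the angle at $x$ is $0$, which forces the triangle to be degenerate and contradicts uniqueness of geodesics. If $p=q$, then $v=w$, because $\exp_p$ is a diffeomorphism by Cartan--Hadamard.

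\emph{Local diffeomorphism.} This is the step needing the most care: I need that there are no focal points of $A$. I would argue via $A$-Jacobi fields. Let $J$ be a Jacobi field along a normal geodesic $\gamma$ from $A$, with $J(0)$ tangent to $A$ and $J'(0)+S_{\gamma'(0)}J(0)$ tangent to $A^\perp$. Since $A$ is totally geodesic, the shape operator vanishes, so $J'(0)\perp A$ and hence $\langle J,J'\rangle(0)=0$. The curvature assumption gives
\[
(|J|^2)''=2|J'|^2-2\langle R(J,\gamma')\gamma',J\rangle\geq 0 .
\]
So $|J|^2$ is convex with derivative $0$ at $t=0$. Therefore $|J|^2$ is nondecreasing and never vanishes for $t>0$ unless $J\equiv0$. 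Hence $d\exp^\perp$ is injective everywhere, and by dimension count it is an isomorphism.

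Alternatively, the local diffeomorphism property follows from the distance function to $A$ being smooth away from $A$ with $\pi$ smooth, but the Jacobi field argument is the cleanest. Combining the two steps, $\exp^\perp$ is a bijective local diffeomorphism and hence a diffeomorphism.
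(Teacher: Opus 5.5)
Your proof is correct, but it takes a different route from the paper. The paper does not argue from scratch. It applies Farrell--Ontaneda's result for a compact totally geodesic submanifold $S$ of a complete, pinched negatively curved manifold $Q$ with $\pi_1S\to\pi_1Q$ an isomorphism; here $Q=M_H$ and $S=A/H$. It takes their lifted normal-exponential diffeomorphism and gets $H$-equivariance from naturality of $\exp$ under isometries, exactly as in your first paragraph.

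You instead work directly in the Hadamard manifold $\widetilde M$:
\begin{enumerate}
  \item surjectivity comes from the nearest-point projection to $A$ together with first variation;
  \item injectivity comes from the angle-sum bound $\le\pi$ for geodesic triangles;
  \item the local diffeomorphism property comes from the absence of focal points of the totally geodesic $A$, via convexity of $|J|^2$ for $A$-Jacobi fields.
\end{enumerate}
This is essentially the standard argument behind the cited fact. It buys self-containment and generality: it uses only $K\le0$, needs no $\pi_1$ hypothesis and no passage through $M_H$, and makes the inverse explicit as $x\mapsto(\pi(x),\exp_{\pi(x)}^{-1}x)$. The paper's citation buys brevity and consistency. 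The same Farrell--Ontaneda setup is reused immediately afterwards for the disk-bundle compactification of $M_H$ and for identifying the radial boundary with the visual boundary, and both rest on this map.

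Two small things to tidy. First, uniqueness of the nearest point is never used, since injectivity is proved separately. Your justification of it is also inaccurate: the distance from a point to a geodesic not containing it is strictly convex even in the flat case. Second, in the injectivity step, treat $x\in A$ separately, because the angle at $x$ is undefined there. In that case uniqueness of geodesics forces $v=w=0$, so $p=q=x$.
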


\begin{proof}
We apply Farrell--Ontaneda \cite[Section~2, item~4]{FO} with $Q=M_H$ and $S=A/H$. The manifold $Q$ is complete and pinched negatively curved, and $S$ is a compact embedded totally geodesic circle whose inclusion induces an isomorphism on fundamental groups. Their lifted normal-exponential diffeomorphism is the stated map. It is $H$-equivariant by naturality of the exponential map under isometries.
\end{proof}

\subsection{Radial and visual compactification}

In the same setup, Farrell--Ontaneda \cite[Section~2, Lemmas~2.3--2.4 and the nearby construction]{FO} compactify $M_H$ by the closed normal disk bundle of $S=A/H$. Identifying $S$ with $c$, the triviality of $\nu c$ gives
\begin{equation}\label{eq:compactification}
  \overline M_H\cong D(\nu c)\cong S^1\times D^3,
\end{equation}
with $\Int\overline M_H=M_H$. Here \(D(\nu c)\) is the closed disk bundle. Choose a fiberwise orthogonal trivialization compatible with the fixed framing of $U$. For some $0<r<1$, it identifies the image of the lift of $U$ about $A$ with $S^1\times rD^3$.

Let $a^+,a^-\in\partial_\infty\widetilde M$ be the endpoints of $A$. The lifted framing identifies the unit normal bundle of $A$ with $\R\times S^2$.

\begin{lemma}\label{lem:radial-visual}
Normal geodesic rays define an $H$-equivariant homeomorphism
\[
  \Theta:\R\times S^2\xrightarrow{\cong}
  \partial_\infty\widetilde M\setminus\{a^+,a^-\}.
\]
In particular, the radial boundary of $\R\times D^3$ is equivariantly the visual boundary with the two axis endpoints removed.
\end{lemma}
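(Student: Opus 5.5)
Define $\Theta$ as follows. For $(t,v)\in\R\times S^2$, identified through the lifted framing with a unit normal vector $v$ at the point $A(t)$, let
\[
  \Theta(t,v)=\gamma_{t,v}(+\infty),
\]
where $\gamma_{t,v}(s)=\exp^\perp(s\,v)$ for $s\ge0$ is the normal geodesic ray. The plan has four steps: check that $\Theta$ is well defined, continuous and equivariant; prove injectivity and that the image avoids $a^\pm$; prove surjectivity onto $\partial_\infty\widetilde M\setminus\{a^+,a^-\}$; and conclude that $\Theta$ is a homeomorphism.

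\emph{Well-definedness, continuity and equivariance.} Since $\widetilde M$ is Hadamard, every geodesic ray has a well-defined endpoint in $\partial_\infty\widetilde M$. The endpoint map from the unit tangent bundle to $\partial_\infty\widetilde M$, taken in the cone topology, is continuous. Hence $\Theta$ is continuous. Equivariance is formal: $g$ is an isometry preserving $A$ and acting on it by translation by $L$, so $g\circ\gamma_{t,v}=\gamma_{t+L,\,dg(v)}$. By the choice of the lifted framing, $dg$ acts as the deck translation on $\R\times S^2$.

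\emph{Injectivity and avoiding $a^\pm$.} Here I use the strict upper curvature bound $K\le-a^2$, so that $\widetilde M$ is CAT$(-a^2)$, hence Gromov hyperbolic, with visibility. First, the ray $\gamma_{t,v}$ is not asymptotic to $a^\pm$. Since it leaves $A$ orthogonally, comparison with the hyperbolic plane of curvature $-a^2$ gives $d(\gamma_{t,v}(s),A)\to\infty$. Indeed, by Lemma~\ref{lem:normal-exp} the nearest point projection of $\gamma_{t,v}$ to $A$ is constant, equal to $A(t)$. A ray asymptotic to $a^+$ would instead have projections tending to $+\infty$ along $A$. Second, suppose two distinct normal rays have the same endpoint $\xi$. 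The projection to $A$ is continuous and, by convexity, extends to the boundary; the projection of $\xi$ is then the common foot point. So equal endpoints force equal $t$. At the same foot point, two distinct rays from one point in a CAT$(-a^2)$ space have distinct endpoints. Hence $\Theta$ is injective.

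\emph{Surjectivity.} Let $\xi\ne a^\pm$. By visibility there is a geodesic line $\sigma$ from $a^-$ to $\xi$. Alternatively, and more directly, consider the geodesic rays from $A(0)$ to $\xi$ and use the strictly convex function $s\mapsto d(\cdot,A)$. The cleanest route is to use Lemma~\ref{lem:normal-exp}. Write the ray $\rho$ from $A(0)$ to $\xi$ in normal coordinates $\rho(s)=\exp^\perp(r(s)\,w(s))$ with foot point $t(s)$. Since $\xi\ne a^\pm$, the Gromov product $(\xi\cdot a^\pm)$ is finite, so $t(s)$ stays bounded and $r(s)\to\infty$. Take a subsequence with $(t(s),w(s))\to(t_*,w_*)$. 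Thin triangles, together with the exponential divergence of normal rays (again from $K\le-a^2$), then show that $\rho$ stays within bounded distance of $\gamma_{t_*,w_*}$. Therefore $\xi=\Theta(t_*,w_*)$.

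\emph{Homeomorphism, and the main obstacle.} Continuity of the inverse is the step I expect to be the main obstacle, because the domain is noncompact. I would handle it with properness: as $|t|\to\infty$, the points $\Theta(t,v)$ converge to $a^{\pm}$ uniformly in $v$. This is because the ray $\gamma_{t,v}$ passes through $A(t)$, and $A(t)\to a^\pm$. Hence $\Theta$ extends to a continuous bijection from the two-point compactification $[-\infty,\infty]\times S^2/\!\sim$ onto the compact Hausdorff space $\partial_\infty\widetilde M$, sending the two added points to $a^\pm$. A continuous bijection from a compact space to a Hausdorff space is a homeomorphism, and restricting it gives that $\Theta$ is a homeomorphism. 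Finally, the statement about the radial boundary follows from Lemma~\ref{lem:normal-exp}. The radial compactification of $\nu(A)\cong\R\times\R^3$ adds exactly the endpoints of the normal rays, so it is $\R\times D^3$, and its radial boundary is identified by $\Theta$ with $\partial_\infty\widetilde M\setminus\{a^\pm\}$, $H$-equivariantly.
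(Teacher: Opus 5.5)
Your route differs from the paper's. The paper proves nothing by hand here: it quotes Farrell--Ontaneda \cite[Section~2, item~9]{FO}, both for $\Theta$ and for the stronger fact that $\id_{\widetilde M}\cup\Theta$ is a homeomorphism $\R\times D^3\to\widetilde M\cup(\partial_\infty\widetilde M\setminus\{a^\pm\})$. You instead attempt a self-contained CAT$(-a^2)$ proof. Several parts of your skeleton are fine: continuity and equivariance of the endpoint map, the fact that $\Theta$ avoids $a^\pm$ because the foot point is constant, and the compact-to-Hausdorff conclusion. The gap is at the two steps where strict negative curvature is indispensable. Your justifications there would apply verbatim to $\R^4$ with $A$ a straight line, where both conclusions are false: there $\Theta(t,v)$ is the direction $v$, independent of $t$, so $\Theta$ is neither injective nor convergent to $a^\pm$. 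Concretely: (1) ``by convexity, the projection extends to the boundary'' does not follow from convexity (a $1$-Lipschitz projection) at all. That the limiting foot point of a ray to $\xi$ is independent of the ray is equivalent to the injectivity you want. (2) ``$\gamma_{t,v}$ passes through $A(t)$ and $A(t)\to a^\pm$'' does not force $\Theta(t,v)$ to be near $a^\pm$.

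Both steps are repaired by one comparison. Let $x$ lie on $\gamma_{t,v}$ and $s\neq t$. The triangle $A(s),A(t),x$ has Alexandrov angle $\pi/2$ at $A(t)$, so its comparison triangle in the plane of curvature $-a^2$ has angle $\geq\pi/2$ there. Hence its angle at $A(s)$ is $<\Pi_a(|s-t|)$, the angle of parallelism, with $\tan(\Pi_a(d)/2)=e^{-ad}$. CAT$(-a^2)$ then gives $\angle_{A(s)}(A(t),x)<\Pi_a(|s-t|)$. Letting $x\to\Theta(t,v)$ yields $\angle_{A(s)}(A(t),\Theta(t,v))\leq\Pi_a(|s-t|)$.
\begin{itemize}
\item Taking $s=0$ gives $\angle_{A(0)}(a^\pm,\Theta(t,v))\leq\Pi_a(|t|)\to0$ uniformly in $v$, which is (2).
\item Suppose $\Theta(t,v)=\Theta(t',v')$ with $t'\neq t$, and take $s=t'$. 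The left side equals $\pi/2$, because $\gamma_{t',v'}$ is the ray from $A(t')$ to that point and it is normal to $A$. But $\Pi_a(|t-t'|)<\pi/2$, a contradiction; this is (1). Alternatively, use that asymptotic rays are strongly asymptotic when $K\leq-a^2$, together with constancy of the projection along normal rays.
\end{itemize}

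Your surjectivity paragraph is only a sketch, and divergence of normal rays plays no role there. Once (1) and (2) hold you can bypass it. The extension of $\Theta$ to the suspension $[-\infty,\infty]\times S^2/{\sim}\cong S^3$ is a continuous injection into $\partial_\infty\widetilde M\cong S^3$, hence onto by invariance of domain.

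Finally, Lemma~\ref{lem:bounded-extension} uses the homeomorphism of partial compactifications, not just the boundary identification. Your method yields it as well. Run the same compact-to-Hausdorff argument on $[-\infty,\infty]\times D^3$ with its two ends collapsed (a $4$-ball). This uses continuity of $(w,r)\mapsto\gamma_w(r)$ on $T^1\widetilde M\times[0,\infty]$, Lemma~\ref{lem:normal-exp}, and the angle bound above together with $d(A(0),\gamma_{t,w}(r))\geq|t|$.
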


\begin{proof}
Farrell--Ontaneda \cite[Section~2, item~9]{FO}, applied to $Q=M_H$ and $S=A/H$, identify the lifted unit normal sphere bundle with the visual boundary complementary to $\partial_\infty A=\{a^+,a^-\}$ by normal geodesic endpoints. Their construction also extends the identity on $\widetilde M$ to a homeomorphism
\[
  \R\times D^3\xrightarrow{\cong}
  \widetilde M\cup
  \bigl(\partial_\infty\widetilde M\setminus\{a^+,a^-\}\bigr),
\]
with the radial-normal topology on the left and the partial visual topology on the right. Both identifications are $H$-equivariant because isometries carry normal geodesic rays to normal geodesic rays.
\end{proof}

\begin{lemma}\label{lem:bounded-extension}
Let $F_s:\widetilde M\to\widetilde M$, $s\in I$, be a continuous family of homeomorphisms for which $d(F_s(x),x)\leq C$ for every $x$ and $s$, with one constant $C$. Then the family extends continuously to the radial compactification and fixes its boundary pointwise. If the family is $H$-equivariant, the extensions descend to the $H$-quotient.
\end{lemma}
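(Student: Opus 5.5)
The plan is to extend each $F_s$ by the identity on $\partial_\infty\widetilde M\setminus\{a^+,a^-\}$, transported through the identification of Lemma~\ref{lem:radial-visual}. Continuity will then be checked in the partial visual topology, where bounded displacement is invisible at infinity.

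\textbf{Step 1: the extension and continuity at interior points.} Define $\overline F_s$ on $\R\times D^3$ by $\overline F_s=F_s$ on the interior $\widetilde M$ and $\overline F_s=\id$ on the radial boundary $\R\times S^2$. By Lemma~\ref{lem:radial-visual}, the radial-normal topology on $\R\times D^3$ agrees with the partial visual topology on $\widetilde M\cup(\partial_\infty\widetilde M\setminus\{a^\pm\})$, so I will work entirely on the visual side. Joint continuity in $(s,x)$ at interior points is given by hypothesis.

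\textbf{Step 2: continuity at a boundary point.} Fix $\xi\in\partial_\infty\widetilde M\setminus\{a^\pm\}$ and a sequence $(s_n,x_n)\to(s,\xi)$, where each $x_n$ lies either in $\widetilde M$ or in the boundary. Boundary terms are fixed by $\overline F_{s_n}$, so it suffices to treat $x_n\in\widetilde M$ with $x_n\to\xi$ visually; in particular $d(o,x_n)\to\infty$ for a basepoint $o$. I need $F_{s_n}(x_n)\to\xi$. In a CAT$(-a^2)$ space (indeed in any CAT(0) space), the angle at $o$ between $x_n$ and $y_n=F_{s_n}(x_n)$ is at most roughly $\arcsin\bigl(C/d(o,x_n)\bigr)$, by comparison with a Euclidean triangle. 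Hence $\angle_o(x_n,y_n)\to0$ while $d(o,y_n)\ge d(o,x_n)-C\to\infty$. In the cone topology this gives $y_n\to\xi$.

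Because $\xi\ne a^\pm$ and the visual topology restricted to $\partial_\infty\widetilde M\setminus\{a^\pm\}$ is the subspace topology, the convergence occurs inside the partial compactification. Pulling back by $\Theta$ gives continuity in the radial topology. Note that the family is only claimed to extend continuously, not that the extensions are homeomorphisms, although they are bijective. The same argument applies to $F_s^{-1}$, provided it also has displacement at most $C$, which holds automatically since $d(F_s^{-1}(y),y)=d(x,F_s(x))$ with $x=F_s^{-1}(y)$.

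\textbf{Step 3: equivariance.} Suppose $F_s\circ g=g\circ F_s$. Then $\overline F_s$ commutes with the extension of $g$: on the interior by hypothesis, and on the boundary because both sides equal $g$ there. By Lemma~\ref{lem:radial-visual}, the action of $g$ on $\R\times S^2$ matches its visual action, so $\overline F_s$ descends to $(\R\times D^3)/H\cong\overline M_H$ via \eqref{eq:compactification}. The descended map fixes the boundary pointwise and is continuous because the quotient map is open.

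\textbf{Main obstacle.} The main difficulty is the uniformity needed in Step 2 for a compact parameter family in the cone topology. The comparison estimate above handles this cleanly, since the constant $C$ is independent of $s$. A lesser concern is ensuring that the radial topology used in \cite{FO} really coincides with the partial visual one near the boundary, which is exactly the content of Lemma~\ref{lem:radial-visual}.
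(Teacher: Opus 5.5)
Your proposal is correct and follows essentially the same route as the paper. You extend by the identity on the partial visual boundary and use the uniform bound $C$ to show that $F_{s_n}(x_n)$ has the same visual limit as $x_n$. You apply the same bound to the inverse family to get homeomorphisms, transport through Lemma~\ref{lem:radial-visual}, and descend by $H$-equivariance. The only difference is that you spell out, via the Euclidean comparison-angle estimate $\angle_o(x_n,y_n)\le\arcsin\bigl(C/d(o,x_n)\bigr)$, the cone-topology step that the paper states in one line.
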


\begin{proof}
If $x_i\to\xi\in\partial_\infty\widetilde M$ and $s_i\to s$, the points $F_{s_i}(x_i)$ remain within distance $C$ of $x_i$, and hence converge to the same point $\xi$ in the visual compactification. The common bound gives joint continuity in the parameter. The inverse family has the same bound, because for $y=F_s(x)$, $d(F_s^{-1}(y),y)=d(x,F_s(x))\leq C$. Thus the extensions are homeomorphisms and form an isotopy relative to the visual boundary. Lemma~\ref{lem:radial-visual} transports this extension to the radial boundary, and equivariance allows passage to the $H$-quotient.
\end{proof}

\subsection{The lifted tube and uniform cancellation}

\begin{lemma}\label{lem:malnormal}
The primitive cyclic subgroup $H=\langle g\rangle$ is malnormal:
\[
  H\cap xHx^{-1}\neq1
  \quad\Longrightarrow\quad
  x\in H.
\]
Therefore,
\(
  N_G(H)=H
  \text{ and }
  Z(G)=1.
\)
\end{lemma}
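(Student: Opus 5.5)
The plan is to use the geometry of axes in the Hadamard manifold $\widetilde M$, whose curvature is bounded above by $-a^2<0$. Suppose $h\in H\cap xHx^{-1}$ with $h\neq1$. Then $h=g^m=xg^nx^{-1}$ for some nonzero integers $m,n$. The element $xg^nx^{-1}$ translates along the geodesic $xA$, since $x$ carries the axis of $g^n$, which is $A$, to the axis of its conjugate. The element $g^m$ translates along $A$. Because a nontrivial element of $G$ has a \emph{unique} invariant geodesic line (recalled in the proof of Lemma~\ref{lem:systole}), we get $xA=A$.

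Next we show that $xA=A$ forces $x\in H$. Since $x$ preserves $A$, it restricts to an isometry of $A\cong\R$. It cannot reverse the orientation of $A$: an orientation-reversing isometry of a line has a fixed point, and $G$ acts freely on $\widetilde M$. So $x$ translates along $A$ by some amount $t$. Then $x$ belongs to the stabilizer $\operatorname{Stab}_G(A)$, which acts freely and properly discontinuously on $A$ by translations. This stabilizer is therefore infinite cyclic, generated by some $h_0$ with $g=h_0^{\,j}$. Primitivity of $g$ (Lemma~\ref{lem:systole}) gives $j=\pm1$, so $\operatorname{Stab}_G(A)=H$ and $x\in H$.

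The uniqueness claim deserves a check, since it carries most of the weight. If $h$ had two distinct invariant geodesic lines, both would be translated by the same amount. The distance between them would then be invariant and, being convex and periodic, bounded. By the flat strip theorem they would bound a flat strip, which is impossible when $K\le -a^2<0$. The paper has already recalled this fact, so it can simply be quoted. Likewise, the statement that the stabilizer of $A$ is cyclic follows from properness and freeness of the deck action; this step is routine.

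Finally, the consequences. If $x\in N_G(H)$, then $xHx^{-1}=H$, so $H\cap xHx^{-1}=H\neq1$ and malnormality gives $x\in H$; the reverse inclusion $H\subseteq N_G(H)$ is trivial. If $z\in Z(G)$, then $z$ normalizes $H$, so $z\in H$, say $z=g^k$. Now $G$ is not cyclic: a closed aspherical $4$-manifold has $\pi_1$ of cohomological dimension $4$. Hence there is $y\notin H$. But $y$ commutes with $z$, so $H\cap yHy^{-1}\ni z$. If $z\neq1$, malnormality would force $y\in H$, a contradiction. Therefore $z=1$ and $Z(G)=1$.
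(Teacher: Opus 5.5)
Your proof is correct and follows the paper's argument. Uniqueness of axes gives $xA=A$, and the stabilizer of $A$ is a free, discrete group of translations generated by the primitive element $g$. Then $Z(G)=1$ follows by applying malnormality to a central element. The only minor difference is how you exclude $G=H$: you use that a closed aspherical $4$-manifold has fundamental group of cohomological dimension $4$, while the paper uses compactness of $M$ versus noncompactness of $M_H$. Both arguments are valid.
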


\begin{proof}
If $g^m=xg^nx^{-1}\neq1$, uniqueness of axes in strict negative curvature gives $xA=A$. The stabilizer of $A$ acts faithfully and discretely on $A\cong\R$. It contains no reflection, since a reflection of $\R$ has a fixed point and the deck action is free. It is therefore a cyclic group of translations. Since $g$ is primitive in $G$, it generates this stabilizer, and hence $x\in H$. This proves malnormality and also $N_G(H)=H$.

Now suppose that $1\neq z\in Z(G)$. Since $z$ commutes with $g$, we have $z\in N_G(H)=H$. For every $x\in G$, $z=xzx^{-1}\in H\cap xHx^{-1}$, so malnormality gives $x\in H$. Thus $G=H$, contradicting compactness of $M$ and noncompactness of $M_H=\Int\overline M_H$.
\end{proof}

Let $\widetilde U\subset\widetilde M$ be the lift of $U$ around $A$. The components of the inverse image of $U$ in $M_H=\widetilde M/H$ are indexed by double cosets $H\backslash G/H$. For a representative $x\in G$, the corresponding component is
\begin{equation}\label{eq:double-coset}
  C_x\cong x\widetilde U/(H\cap xHx^{-1}).
\end{equation}
For the identity double coset, $C_1=\widetilde U/H\cong U$. It is the compact \emph{core component} and maps degree one to $U$. If $x\notin H$, Lemma~\ref{lem:malnormal} makes the denominator in \eqref{eq:double-coset} trivial, so $C_x\cong\R\times D^3$, and $C_x\to U$ is the universal cyclic cover. Self-normalization gives the uniqueness of the compact core component.

The family of components is locally finite: each sheet over a sufficiently small evenly covered neighborhood in $M$ meets at most one component of the inverse image of $U$. Each component inherits the framing of $U$, and two covering parametrizations of a noncore component differ only by a deck translation in the $\R$-coordinate.

\begin{lemma}\label{lem:cancellation}
Let $\delta=\prod_{k=4}^{N}\delta_k^{n_k}$ be a finite word, let $F\in\Diff^+(M)$ be its implantation in $U$, and let $F_H$ be the componentwise lift to $M_H$. There is a smooth isotopy on $M_H$ from $F_H$ to a diffeomorphism $F_{\mathrm{core}}$ which equals $\delta$ on the core component and the identity away from the core. The isotopy is stationary on a neighborhood of the boundary of every lifted tube, and it has an $H$-equivariant lift to $\widetilde M$ that stays a uniformly bounded distance from the identity, with one bound for all noncore components.
\end{lemma}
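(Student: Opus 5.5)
The plan is to handle each noncore component $C_x\cong\R\times D^3$ on its own, using Theorem~\ref{thm:local-input}(ii), and then to assemble the resulting isotopies by local finiteness. On $C_x$ the lift $F_H$ is the lift of $\delta$ to the universal cyclic cover of $S^1\times D^3$. This lift is a $\Z$-periodic diffeomorphism of $\R\times D^3$ that is the identity near $\R\times\partial D^3$. Put $d=\prod_{k=4}^{N}d_k$; a common multiple of the $d_k$ would do equally well. The preferred lift of each $\delta_k$ to the $d_k$-fold cover is isotopic to the identity rel boundary, so the same holds for its further lift to the $d$-fold cover. The preferred lift of a product is the product of the preferred lifts. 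Hence the preferred lift $\hat\delta$ of $\delta$ to the $d$-fold cyclic cover $\R/dL\Z\times D^3$ is smoothly isotopic to the identity rel boundary, say through $\hat\phi_t$. We lift $\hat\phi_t$ to the universal cover $\R\times D^3$, fixing the lifted boundary basepoint. This gives an isotopy $\Phi_t$ from the universal lift of $\delta$ to the identity. It is equivariant under the translation by $dL$ and stationary near $\R\times\partial D^3$. Because it is $dL$-periodic and each $\hat\phi_t$ on the compact cover is uniformly close to the identity, $\Phi_t$ moves points by at most a constant $C_0$ for all $t$, in the product metric on $\R\times D^3$.

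Next we transport $\Phi_t$ to every noncore component $C_x$ through its covering parametrization. Two parametrizations differ by a deck translation in the $\R$-coordinate, and the universal lift of $\delta$ commutes with such translations. The isotopy $\Phi_t$, however, is only $dL$-periodic. So we fix one parametrization for each $C_x$, which amounts to choosing a double-coset representative $x$ once and for all. Since $\Phi_t$ is stationary near the boundary, it extends by the identity. The family of components is locally finite, so doing this simultaneously on all noncore components gives a smooth isotopy of $M_H$ from $F_H$ to $F_{\mathrm{core}}$. On the core $C_1\cong U$ the isotopy is stationary, equal to $\delta$.

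For the lift to $\widetilde M$, take the preimage of $C_x$ in $\widetilde M$. It is the $H$-orbit of $x\widetilde U$, a disjoint union of the translates $hx\widetilde U$, each mapping diffeomorphically onto $C_x$ because $H\cap xHx^{-1}=1$. On each translate we define the lift by pulling back through that diffeomorphism. This is automatically $H$-equivariant. On the lifts of the core we use the $H$-equivariant lift of $\delta$ to $\widetilde U$. For the bound, the covering map $x\widetilde U\to C_x\cong\R\times D^3$ is an isometry for the lifted metric. The framing identifies $U$ with $S^1\times D^3$ by a fixed diffeomorphism, which is bi-Lipschitz with constant $\lambda$ independent of $x$. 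The distance in $\widetilde M$ is at most the intrinsic distance in the tube. So the displacement is at most $\lambda C_0$ plus a constant for all $x$. The core lifts are handled by a single bound, since $\delta$ is periodic there.

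The main obstacle is this uniformity step. We must make sure the isotopy $\Phi_t$ on the universal cover really has bounded displacement, using periodicity and compactness of the $d$-fold cover. We must also check that comparing metrics through the fixed framing gives a bound independent of the component, in spite of the noncanonical choice of parametrization on each $C_x$. Smoothness of the assembled isotopy on $M_H$ follows from local finiteness and stationarity near the tube boundaries.
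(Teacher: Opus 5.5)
Your proposal is correct and follows the paper's proof essentially step for step: a common multiple $d$, the concatenated rel-boundary isotopy on the $d$-fold cover, its $dL$-periodic lift to $\R\times D^3$, transport to each noncore component, assembly by local finiteness, and a uniform bound from compactness of a fundamental domain. The paper bounds track lengths in the lifted metric, which makes your bi-Lipschitz comparison unnecessary; the cleanest justification of your $C_0$ is continuity of $(z,t)\mapsto d(\Phi_t(z),z)$ on a fundamental domain times $[0,1]$, not closeness of $\hat\phi_t$ to the identity on the compact cover. Your remark that a parametrization must be fixed on each $C_x$, because $\Phi_t$ is only $dL$-periodic, correctly sharpens the paper's looser wording that the transported isotopy is unchanged, since only the bound is.
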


\begin{proof}
The identity word is immediate, so suppose that some $n_k$ is nonzero. For each such $k$, choose the degree $d_k$ supplied by Theorem~\ref{thm:local-input}(ii), and set $d=\operatorname{lcm}\{d_k\mid n_k\neq0\}$. The $d$-fold cyclic cover $U_d\to U$ factors through every relevant $d_k$-fold cover. Pulling up the individual boundary-relative isotopies, taking inverses for negative powers, repeating them for positive powers, and concatenating gives a smooth isotopy
\[
  \Phi_s\in\Diff(U_d\mathbin{\mathrm{rel}}\partial U_d),
  \qquad
  \Phi_0=\delta^{(d)},
  \quad
  \Phi_1=\id,
\]
where $\delta^{(d)}$ is the preferred lift of $\delta$.

Equip $U_d$ with the lifted metric. Compactness gives a finite common length bound
\[
  L_0=\sup_{z\in U_d}\int_0^1
  \left\lVert\frac{\partial}{\partial s}\Phi_s(z)\right\rVert ds
  <\infty.
\]
Lift $\Phi_s$ to the universal cyclic cover $\R\times D^3$, starting with the preferred lift of $\delta$. The lifted isotopy ends at the identity. Every lifted track has length at most $L_0$.

Transport the lifted isotopy to every noncore component through its covering parametrization over the framed tube $U$. Different parametrizations differ by translations, so the transported isotopy and its bound are unchanged. Leave the core component fixed and use the identity outside the inverse image of $U$. The components are disjoint and locally finite, and all formulas are stationary near their boundaries. Thus they assemble to a smooth isotopy on $M_H$. Its endpoint is $F_{\mathrm{core}}$.

Lift the assembled isotopy to $\widetilde M$ from the $H$-equivariant lift of $F_H$. Uniqueness of lifted homotopies makes the whole family $H$-equivariant. The same number $L_0$ controls every noncore track. On the core, the isotopy is constant at the preferred lift of $\delta$.
Its displacement is bounded because the lift is equivariant under deck
translations and $U$ is compact. Thus the isotopy has uniformly bounded displacement.
\end{proof}

\section{The Torelli detector}

Since $\widetilde M$ is Hadamard, it is diffeomorphic to $\R^4$,
so $M$ is a $K(G,1)$. Fix $q\in\partial U$. Since $\pi_1(\partial U,q)\to\pi_1(U,q)$ is an isomorphism, a relative self-map of $U$ induces the identity on $\pi_1(U,q)$. Therefore extension by the identity induces the identity on $G$. Implantation gives homomorphisms $\ABG\longrightarrow T(M)\longrightarrow\TTop(M)$. Denote their composite by $\Ic:\ABG\longrightarrow\TTop(M)$.

\begin{proposition}[Torelli cyclic-cover detector]\label{prop:detector}
There is a well-defined homomorphism
\[
  \Lc:\TTop(M)\longrightarrow \pi_0\Homeo(S^1\times D^3\mathbin{\mathrm{rel}}\partial),
  \qquad
  \Lc\circ\Ic=\id_{\ABG},
\]
where $\ABG$ is identified with its image in $\pi_0\Homeo(S^1\times D^3\mathbin{\mathrm{rel}}\partial)$.
\end{proposition}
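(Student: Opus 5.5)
Given $f\in\Homeo^+(M)$ whose class lies in $\TTop(M)$, I will construct $\Lc[f]$ as follows. Because $f$ acts trivially on $\Out(G)$ and $M$ is a $K(G,1)$, $f$ is homotopic to the identity. By Lemma~\ref{lem:malnormal}, $Z(G)=1$, so after composing with an inner automorphism the induced map on $G$ can be made literally the identity. This makes the choice of lift canonical: there is a unique lift $\widetilde f:\widetilde M\to\widetilde M$ that commutes with every deck transformation, and in particular with $H$.

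Next I bound the displacement of $\widetilde f$. Take a homotopy $f\simeq\id$ with a $G$-equivariant lift from $\widetilde f$ to $\id$. Compactness of $M$ then bounds the tracks uniformly, so $d(\widetilde f(x),x)\le C$. By Lemma~\ref{lem:bounded-extension}, applied to the single map $\widetilde f$, it extends to the radial compactification $\R\times D^3$ as the identity on the boundary. It commutes with $H$, so it descends to a homeomorphism $f_H$ of $\overline M_H\cong S^1\times D^3$ (see \eqref{eq:compactification}) fixing $\partial$ pointwise. I set $\Lc[f]=[f_H]$.

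For well-definedness, suppose $f_t$ is an isotopy between two representatives. The same normalization gives a continuous family of $G$-commuting lifts $\widetilde f_t$, and the displacement bound is uniform in $t$ by compactness of $M\times I$. Lemma~\ref{lem:bounded-extension} therefore yields an isotopy rel boundary between the two values of $\Lc$. The lift is unique because the centralizer of $G$ in $\Homeo(\widetilde M)$ commuting with the deck group is trivial: any such lift differs from another by a deck transformation central in $G$, hence trivial. This uniqueness is also what makes $\Lc$ a homomorphism, since $\widetilde{f\circ h}=\widetilde f\circ\widetilde h$.

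For $\Lc\circ\Ic=\id$, take $F$ the implantation of a word $\delta$. Its canonical lift is the componentwise lift $\widetilde F_H$, which is the identity outside the preimage of $U$ and commutes with $G$. Lemma~\ref{lem:cancellation} supplies an $H$-equivariant isotopy of bounded displacement from $\widetilde F_H$ to the lift of $F_{\mathrm{core}}$. Lemma~\ref{lem:bounded-extension} upgrades this to an isotopy rel boundary on $\overline M_H$. Now $F_{\mathrm{core}}$ is $\delta$ on $C_1\cong U\cong S^1\times rD^3$ and the identity elsewhere. Shrinking the radial collar from $r$ to $1$ gives a homeomorphism conjugating this map to $\delta$ on $S^1\times D^3$, using the trivialization compatible with the framing. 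Hence $\Lc\Ic[\delta]=[\delta]$.

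I expect the main obstacle to be making the identification $\overline M_H\cong S^1\times D^3$ canonical enough that $\Lc$ lands in a fixed group. In particular, the collar-stretching must be isotopic rel boundary to the framed identification, so that the result is $\delta$ itself rather than a conjugate of it by a framing twist. I would handle this by choosing the trivialization of \eqref{eq:compactification} once and for all, compatible with $U$, as the paper does.
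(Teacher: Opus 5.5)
Your proposal is correct and follows the paper's proof essentially step for step. You use the unique $G$-equivariant lift (uniqueness from $Z(G)=1$), a uniform displacement bound, extension via Lemma~\ref{lem:bounded-extension} and descent to $\overline M_H$, well-definedness and the homomorphism property from uniqueness of the canonical lift, and $\Lc\circ\Ic=\id$ via Lemma~\ref{lem:cancellation} plus radial expansion. The differences are cosmetic: you get the equivariant lift directly from triviality in $\Out(G)$ rather than as the endpoint of a lifted free homotopy, and the displacement bound follows more simply from $G$-invariance of $x\mapsto d(x,\widetilde f(x))$. Also, what is trivial is $Z(G)$, not the centralizer of $G$ in $\Homeo(\widetilde M)$ as you wrote, though your subsequent sentence gives the right argument.
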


\begin{proof}
Let $[f]\in\TTop(M)$. Since $f$ induces the trivial element of
$\Out(G)$, its induced automorphism of $G$ is inner. Self-maps of the
$K(G,1)$ space $M$ are classified up to free homotopy by conjugacy
classes of their induced homomorphisms on $G$, so $f$ is freely
homotopic to the identity. Lift it with $\widetilde I_0=\id_{\widetilde M}$. For every $\gamma\in G$, the maps $\widetilde I_t\gamma$ and $\gamma\widetilde I_t$ lift the same homotopy and agree at $t=0$. Hence they agree for all $t$. The endpoint $\widetilde f$ is therefore $G$-equivariant and descends to a homeomorphism $f_H$ of $M_H$.

This endpoint lift is independent of the chosen free homotopy. Indeed, two such endpoints differ by a deck transformation $z\in G$, and equivariance makes $z$ central. Lemma~\ref{lem:malnormal} therefore gives $z=1$.

The displacement function $x\longmapsto d(x,\widetilde f(x))$ is $G$-invariant and hence bounded. Lemma~\ref{lem:bounded-extension}, applied to the constant family, extends $f_H$ to a homeomorphism $\overline f_H$ of $\overline M_H$ fixing its boundary pointwise. If two representatives of $[f]$ are isotopic, the isotopy lifts from the canonical initial lift to a $G$-equivariant family ending at the canonical terminal lift. Its displacement descends to a continuous function on the compact space $M\times I$ and is therefore uniformly bounded. Lemma~\ref{lem:bounded-extension} extends the family relative to the boundary. Thus, using the identification \eqref{eq:compactification}, $\Lc([f])=[\overline f_H]\in \pi_0\Homeo(S^1\times D^3\mathbin{\mathrm{rel}}\partial)$ is well defined. The product of the canonical lifts of $f$ and $h$ is the canonical lift of $fh$, so $\Lc$ is a homomorphism.

Let $\delta=\prod_{k=4}^{N}\delta_k^{n_k}$ be a finite word, and let $F$ be its implantation in $M$. Since $F$ fixes $q$ and induces the identity on $G$, its lift fixing a chosen lift of $q$ is $G$-equivariant and hence is the canonical lift above. Its descent to $M_H$ is the componentwise map consisting of the degree-one copy of $\delta$ on the core, the preferred cyclic lift on each noncore component, and the identity elsewhere.

Lemma~\ref{lem:cancellation} cancels all noncore copies through an isotopy whose lift to $\widetilde M$ has uniformly bounded displacement. Lemma~\ref{lem:bounded-extension} extends this to an isotopy of $\overline M_H$ relative to its boundary. Its endpoint is supported on the core $S^1\times rD^3$ and is the corresponding copy of $\delta$ there. Radial expansion gives a relative isotopy from this core-supported map to $\delta$ on $S^1\times D^3$. Hence $\Lc(\Ic([\delta]))=[\delta]$, which proves the proposition.
\end{proof}

\begin{proof}[Proof of Theorem~\ref{thm:main}]
Proposition~\ref{prop:detector} shows that $\Ic$ is injective. Consequently, the smooth implantation $\ABG\longrightarrow T(M)$ is injective and remains injective after passing to $\pi_0\Homeo^+(M)$.

Since $\pi_0\Homeo(S^1\times D^3\mathbin{\mathrm{rel}}\partial)$ is abelian, $\Lc$ factors through $\TTop(M)_{\mathrm{ab}}$. The identity $\Lc\circ\Ic=\id_{\ABG}$ therefore gives an injection
\[
  \ABG\hookrightarrow\TTop(M)_{\mathrm{ab}}.
\]
Since $\Q$ is flat over $\Z$ and $H_1(\Gamma;\Q)\cong\Gamma_{\mathrm{ab}}\otimes\Q$, we have
\[
  \bigoplus_{k\geq4}\Q
  \cong
  \ABG\otimes\Q
  \hookrightarrow
  H_1(\TTop(M);\Q).
\]
Thus $\dim_{\Q}H_1(\TTop(M);\Q)=\infty$.

The composite $T(M)\longrightarrow\TTop(M)\xrightarrow{\Lc}\pi_0\Homeo(S^1\times D^3\mathbin{\mathrm{rel}}\partial)$ has the same restriction to the implanted subgroup, so the same argument gives $\dim_{\Q}H_1(T(M);\Q)=\infty$.

The construction preceding Proposition~\ref{prop:detector} gives representatives inducing the identity on $\pi_1(M)$ and homotopic to the identity. Finally, Theorem~\ref{thm:local-input}(iii) gives every element of $\ABG$ a smooth pseudo-isotopy to the identity relative to $\partial U$. Extending by the identity on $(M\setminus\Int U)\times I$ gives the asserted smooth pseudo-isotopy on $M$.
\end{proof}


\begin{thebibliography}{99}

\bibitem{BG}
R.~Budney and D.~Gabai,
\emph{On the automorphism groups of hyperbolic manifolds},
Int. Math. Res. Not. IMRN 2025, no.~7, rnaf083,
\href{https://doi.org/10.1093/imrn/rnaf083}{doi:10.1093/imrn/rnaf083}.

\bibitem{FO}
F.~T.~Farrell and P.~Ontaneda,
\emph{On the topology of the space of negatively curved metrics},
J. Differential Geom. \textbf{86} (2010), 273--301.

\bibitem{Goldman}
W.~M.~Goldman,
\emph{Complex hyperbolic geometry},
Oxford Mathematical Monographs, The Clarendon Press, Oxford University Press, New York, 1999.

\bibitem{Kosanovic}
D.~Kosanovi\'c,
\emph{Diffeomorphisms of $4$-manifolds from graspers},
Proc. Lond. Math. Soc. (3) \textbf{131} (2025), no.~1, e70065,
\href{https://doi.org/10.1112/plms.70065}{doi:10.1112/plms.70065}.

\bibitem{Paulin}
F.~Paulin,
\emph{Outer automorphisms of hyperbolic groups and small actions on
$\mathbb R$-trees},
in \emph{Arboreal group theory} (Berkeley, CA, 1988),
Math. Sci. Res. Inst. Publ., vol.~19,
Springer, New York, 1991, pp.~331--343.

\bibitem{PrasadYeung}
G.~Prasad and S.-K.~Yeung,
\emph{Fake projective planes},
Invent. Math. \textbf{168} (2007), no.~2, 321--370;
addendum, Invent. Math. \textbf{182} (2010), no.~1, 213--227.

\end{thebibliography}
\end{document}